\newtheorem{theorem}{Theorem}
\theoremstyle{definition}
\newtheorem{definition}{Definition}
\begin{document}

\title{Kumaraswamy and beta distribution are related by the logistic
  map} \author{B.~Trancón y Widemann\\[1ex] \normalsize
  {\ttfamily Baltasar.Trancon@uni-bayreuth.de}\\
  \normalsize Ecological Modelling, University of Bayreuth, Germany}

\maketitle

\begin{abstract}
  The Kumaraswamy distribution has been proposed as an alternative to
  the beta distribution with more benign algebraic properties. They
  have the same two parameters, the same support and qualitatively
  similar shape for any parameter values. There is a generic
  relationship between the distributions established by a simple
  transformation between arbitrary Kumaraswamy-distributed random
  variables and certain beta-distributed random variables. Here, a
  different relationship is established by means of the logistic map,
  the paradigmatic example of a discrete non-linear dynamical system.
\end{abstract}

\noindent\textbf{Keywords:}
Kumaraswamy distribution; Beta distribution; Logistic map



\section{Introduction}

\subsection{The Logistic Map}

The logistic map is a parametric discrete-time non-linear dynamical
system. It is widely studied because of its complex transition from
orderly to chaotic behavior in spite of an extremely simple defining
equation \citep{may1976}.

\begin{definition}[Logistic map]
  For a real parameter $r > 0$, the logistic map is the function
  \begin{equation}
    \label{eq:logistic}
    f_r(x) = r x (1 - x)
  \end{equation}
  restricted to the closed real interval $[0, 1]$. This is a total
  function for $r \leq 4$.
\end{definition}

The objects of interest are the \emph{trajectories} that arise from
the iteration of $f_r$, i.e., sequences with the recurrence relation
$x_{n+1} = f_r(x_n)$. With increasing $r$, they show all kinds of
behavior from convergence in a singular attractor through bifurcating
periodic solutions to deterministic chaos. The case $r = 4$ is of
special interest: It is known to be chaotic and ergodic on the
interval $[0,1]$. The density of the attractor is given by the beta
distribution with parameters $\alpha = \beta = 1/2$ and hence, by
ergodicity, the long-term distribution of states under almost all
initial conditions (with the exception of unstable periodic solutions)
converges to that distribution \citep{Jakobson1981}.

\subsection{The Kumaraswamy Distribution}

\Citet{Kumaraswamy198079} introduced his now eponymous distribution,
originally called \emph{double-bounded} distribution, as an
alternative to the beta distribution. They have the same real
parameters $\alpha, \beta > 0$, the same support and similar shapes,
but the Kumaraswamy distribution function, unlike the beta
distribution function, has a closed algebraic form. Is has been found
both more accurately fitting hydrological data in simulations
\citep{Kumaraswamy198079} and computationally more tractable
\citep{Jones200970}.

The similarity between the two classes can be formalized.  It is known
and easy to see (from equations \eqref{eq:beta} and
\eqref{eq:kumaraswamy} below) that, if random variable $X$ is
Kumaraswamy-distributed with parameters $\alpha$ and $\beta$, then
$X^\alpha$ is beta-distributed with $\alpha = 1$ and the same
$\beta$. Here, a new formal relationship between certain beta and
Kumaraswamy distributions is established.

\section{Computing State Distributions of the Logistic Map}

Given a random variable $X$ with known distribution on $[0, 1]$, the
propagated distribution of $f_r(X)$ can be computed. In the following,
the term \emph{continuous probability distribution} (cdf) refers to a
cumulative distribution function with support $[0, 1]$, that is, a
continuous weakly monotonic function $F : [0, 1] \to [0, 1]$ with
$F(0) = 0$ and $F(1) = 1$. A random variable $X$ is distributed
according to $F$, written $X \sim F$, if and only if $P(X \leq y) =
P(X < y) = F(y)$.

\begin{definition}[Propagation]
  Let $F$ be a cdf. Then $\widetilde f_r(F)$ is another cdf, namely
  \begin{subequations}
  \begin{equation}
    \label{eq:liftcdf}
    \widetilde f_r(F)(y) = F\bigl(\tfrac 12 - q_r(y)\bigr) + 1 -
    F\bigl(\tfrac 12 + q_r(y)\bigr) \,,
  \end{equation}
  where
  \begin{equation}
    \label{eq:liftroot}
    q_r(y) =
    \begin{cases}
      \sqrt{\tfrac 14 - \tfrac yr} & \text{if } y \leq \tfrac r4
      \\
      0 & \text{if } y > \tfrac r4 \,.
    \end{cases}
  \end{equation}
\end{subequations}
\end{definition}

\begin{theorem}
  \label{theorem:propagation}
  If $X \sim F$ then $f_r(X) \sim \widetilde f_r(F)$.
\end{theorem}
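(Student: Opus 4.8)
The plan is to compute the cumulative distribution function of $f_r(X)$ directly from its definition, $P\bigl(f_r(X) \leq y\bigr)$, and to check that it coincides term by term with $\widetilde f_r(F)(y)$. The geometric fact driving everything is that $f_r(x) = rx(1-x)$ is a downward-opening parabola on $[0,1]$ with a single maximum at $x = \tfrac12$, where it attains the value $\tfrac r4$. Consequently the preimage $\{x \in [0,1] : f_r(x) \leq y\}$ changes shape according to whether $y$ lies above or below $\tfrac r4$, which is precisely the case split that defines $q_r$ in equation \eqref{eq:liftroot}.

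First I would dispose of the easy regime $y \geq \tfrac r4$. Since $\tfrac r4$ is the maximum value of $f_r$, and $r \leq 4$ guarantees $\tfrac r4 \leq 1$ so that $f_r$ maps into $[0,1]$, the event $\{f_r(X) \leq y\}$ is all of $[0,1]$ and its probability is $1$. On the right-hand side $q_r(y) = 0$, so \eqref{eq:liftcdf} reduces to $F(\tfrac12) + 1 - F(\tfrac12) = 1$, in agreement.

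The substantive regime is $y < \tfrac r4$. Here I would solve the quadratic inequality $rx(1-x) \leq y$, equivalently $x^2 - x + \tfrac yr \geq 0$. Its roots are $\tfrac12 \pm q_r(y)$ with $q_r(y) = \sqrt{\tfrac14 - \tfrac yr}$, and since the parabola opens upward the inequality holds exactly outside the roots. Hence the preimage is the disjoint union $[0, \tfrac12 - q_r(y)] \cup [\tfrac12 + q_r(y), 1]$; a short check that $0 \leq q_r(y) \leq \tfrac12$ for $0 \leq y < \tfrac r4$ confirms both endpoints lie in $[0,1]$, so no clamping is required. Splitting the probability over these two disjoint pieces gives $P\bigl(X \leq \tfrac12 - q_r(y)\bigr) + P\bigl(X \geq \tfrac12 + q_r(y)\bigr)$, and invoking the defining identity $P(X \leq y) = P(X < y) = F(y)$ of $X \sim F$, which yields $P(X \geq a) = 1 - F(a)$, turns this into exactly \eqref{eq:liftcdf}.

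None of the individual steps is genuinely difficult; the work is in getting the bookkeeping of the case distinction right and in handling the boundary where the two intervals merge at $y = \tfrac r4$. The single substantive hypothesis is the continuity built into the definition of a cdf, which guarantees $P(X \leq y) = P(X < y)$ and so lets the inclusive lower-tail and exclusive upper-tail probabilities combine cleanly; I would flag this dependence explicitly rather than let it pass unremarked.
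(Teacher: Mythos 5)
Your proposal is correct and takes essentially the same approach as the paper: solve the quadratic inequality $rx(1-x)\leq y$, split the preimage into the two tails around $x=\tfrac12$, and use the continuity of $F$ to write the upper tail as $1 - F\bigl(\tfrac12 + q_r(y)\bigr)$. The paper merely folds your explicit case split on $y \gtrless \tfrac r4$ into the definition of $q_r$ and presents the argument as a single chain of equalities.
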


\begin{proof}
  \begin{align*}
    P(f_r(X) \leq y\bigr) &= P\bigl(rx(1-x) \leq y)
    \\
    &= P\bigl(\bigl(x - \tfrac 12\bigr)^2 \geq \tfrac 14 - \tfrac yr\bigr)
    \\
    &= P\bigl(\bigl\lvert x - \tfrac 12\bigr\rvert \geq q_r(y)\bigr)
    \\
    &= P\bigl(x \leq \tfrac 12 - q_r(y) \lor x \geq \tfrac 12 + q_r(y)\bigr)
    \\
    &= P\bigl(x \leq \tfrac 12 - q_r(y)\bigr) + P\bigl(x \geq \tfrac 12 +
    q_r(y)\bigr)
    \\
    &= P\bigl(x \leq \tfrac 12 - q_r(y)\bigr) + 1 - P\bigl(x \leq \tfrac 12 +
    q_r(y)\bigr)
    \\
    &= F\bigl(\tfrac 12 - q_r(y)\bigr) + 1 - F\bigl(\tfrac 12 + q_r(y)\bigr)
    \\
    &= \widetilde f_r(F)(y)
  \end{align*}
\end{proof}

The beta distribution with real parameters $\alpha, \beta > 0$ has
the cdf
\begin{equation}
  \label{eq:beta}
  B(\alpha, \beta)(y) = \int_0^y t^{\alpha-1} (1-t)^{\beta-1} dt \Bigm/ \int_0^1 t^{\alpha-1} (1-t)^{\beta-1} dt \,.
\end{equation}

For $\alpha = \beta = 1/2$ the beta distribution is the arcsine
distribution.
\begin{equation}
  \label{eq:arcsin}
  B(1/2, 1/2)(y) = A(y) = 2/\pi \arcsin \sqrt{y}
\end{equation}

\begin{theorem}
  \label{theorem:arcsin}
  The arcsine distribution is a fixed point of $\widetilde
  f_4$.
\end{theorem}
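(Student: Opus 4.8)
The plan is to verify the fixed-point equation $\widetilde f_4(A) = A$ pointwise by direct substitution, exploiting the trigonometric form \eqref{eq:arcsin} of the arcsine cdf. First I would specialize the root function to $r = 4$: since the support is $[0,1]$ we always have $y \leq 1 = r/4$, so the piecewise definition \eqref{eq:liftroot} collapses to the single branch $q_4(y) = \sqrt{\tfrac 14 - \tfrac y4} = \tfrac 12 \sqrt{1-y}$. By \eqref{eq:liftcdf} the goal then reduces to showing
\[
A\bigl(\tfrac 12 - \tfrac 12 \sqrt{1-y}\bigr) + 1 - A\bigl(\tfrac 12 + \tfrac 12 \sqrt{1-y}\bigr) = A(y)
\]
for all $y \in [0,1]$.

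The key device is the substitution $\sqrt y = \sin\theta$ with $\theta \in [0, \tfrac\pi2]$, which makes $A(y) = \tfrac 2\pi \theta$ and $\sqrt{1-y} = \cos\theta$. The half-angle identities $\tfrac{1-\cos\theta}{2} = \sin^2(\theta/2)$ and $\tfrac{1+\cos\theta}{2} = \cos^2(\theta/2)$ convert the two arguments of $A$ on the left-hand side into $\sin^2(\theta/2)$ and $\cos^2(\theta/2)$ respectively. Taking the (nonnegative) square roots and applying the arcsine, the first term becomes $\tfrac 2\pi \cdot \tfrac\theta2$, while the second uses $\arcsin(\cos\phi) = \tfrac\pi2 - \phi$ to become $\tfrac 2\pi\bigl(\tfrac\pi2 - \tfrac\theta2\bigr)$. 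Summing, the constant contributions telescope and leave $\tfrac{2\theta}{\pi} = A(y)$, as required.

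The main obstacle is ensuring the arcsine inversions are taken on the correct principal branch. I must confirm that $\sin(\theta/2)$ and $\cos(\theta/2)$ both lie in $[0,1]$ and that the recovered angles $\theta/2$ and $\tfrac\pi2 - \theta/2$ both stay within $[-\tfrac\pi2, \tfrac\pi2]$, so that $\arcsin$ returns them unchanged; since $\theta/2 \in [0, \tfrac\pi4]$ this holds without any case distinction. As a final sanity check I would evaluate the boundaries: at $y = 0$ the identity reads $A(0) + 1 - A(1) = 0$ and at $y = 1$ it reads $A(\tfrac12) + 1 - A(\tfrac12) = 1$, both consistent with $A(0) = 0$ and $A(1) = 1$. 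This recovers, as expected, the ergodic invariant density of the $r = 4$ logistic map noted in the introduction.
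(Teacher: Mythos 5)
Your proof is correct and is essentially the paper's argument in a more explicit parametrization: the substitution $\sqrt{y}=\sin\theta$ together with the half-angle identities is exactly the content of the paper's identity \eqref{eq:arclemma}, and your use of $\arcsin(\cos\phi)=\tfrac\pi2-\phi$ on the second term plays the role of the symmetry of $A$ about $\tfrac12$ that the paper invokes to collapse the two terms into one. The branch checks you carry out are a welcome addition, but the route is the same.
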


\begin{proof}
  Using the symmetry of \eqref{eq:arcsin} and the trigonometric
  identity
  \begin{equation}
    \label{eq:arclemma}
    \arcsin \sqrt{y} = 2 \arcsin \sqrt{\frac{1 - \sqrt{1 - y}}2} \,,
  \end{equation}
  one finds that
  \begin{align*}
    \widetilde f_4(A)(y) &= 2A\bigl(\tfrac 12 - q_4(y)\bigr)
    = 2A\left(\frac{1 - \sqrt{1 - y}}2\right)
    = A(y) \,.
  \end{align*}
\end{proof}

The uniform distribution has the cdf $U(y) = y$.  The Kumaraswamy
distribution with real parameters $\alpha, \beta > 0$ has the cdf
\begin{equation}
  \label{eq:kumaraswamy}
  K(\alpha, \beta)(y) = 1 - (1 - y^\alpha)^\beta \,.
\end{equation}

\begin{theorem}
  \label{theorem:kumaraswamy}
  Let $X$ be distributed uniformly. Then $f_4{}^2(X) =
  f_4\bigl(f_4(X)\bigr)$ is Kumaraswamy-distributed with $\alpha =
  \beta = 1/2$.
  \begin{equation}
    \label{eq:main}
    \widetilde f_4{}^2(U) = K\bigl(\tfrac 12, \tfrac 12\bigr)
  \end{equation}
\end{theorem}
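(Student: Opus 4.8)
The plan is to apply the propagation operator $\widetilde f_4$ twice to $U$ by direct computation, with Theorem~\ref{theorem:propagation} supplying the probabilistic interpretation so that $\widetilde f_4{}^2(U)$ is genuinely the distribution of $f_4{}^2(X)$. First I would specialize $q_r$ to $r = 4$: since $r/4 = 1 \geq y$ for every $y \in [0,1]$, the first branch of \eqref{eq:liftroot} always applies and $q_4(y) = \tfrac 12 \sqrt{1 - y}$.

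Next I would compute the first iterate. Because $U$ is the identity, the two evaluations in \eqref{eq:liftcdf} collapse cleanly:
\[
\widetilde f_4(U)(y) = \bigl(\tfrac 12 - q_4(y)\bigr) + 1 - \bigl(\tfrac 12 + q_4(y)\bigr) = 1 - 2 q_4(y) = 1 - \sqrt{1 - y} \,.
\]
Incidentally this is $K(1, \tfrac 12)$, consistent with the generic $X^\alpha$ relationship noted in the introduction; but I would simply abbreviate it as $G(y) = 1 - \sqrt{1-y}$ and feed it back in. Substituting $G$ into \eqref{eq:liftcdf} yields nested radicals, $G\bigl(\tfrac 12 \mp q_4(y)\bigr) = 1 - \sqrt{\tfrac 12 \pm \tfrac 12 \sqrt{1-y}}$, so that
\[
\widetilde f_4{}^2(U)(y) = 1 - \sqrt{\tfrac 12 + \tfrac 12 \sqrt{1-y}} + \sqrt{\tfrac 12 - \tfrac 12 \sqrt{1-y}} \,.
\]

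The crux is to simplify the difference of the two remaining radicals. Writing $a = \sqrt{\tfrac 12 + \tfrac 12 \sqrt{1-y}}$ and $b = \sqrt{\tfrac 12 - \tfrac 12 \sqrt{1-y}}$, I would observe $a^2 + b^2 = 1$ and $ab = \sqrt{\tfrac 14 - \tfrac 14 (1-y)} = \tfrac 12 \sqrt y$, whence $(a - b)^2 = 1 - 2ab = 1 - \sqrt y$. Since $a \geq b \geq 0$, this gives $a - b = \sqrt{1 - \sqrt y}$ and therefore $\widetilde f_4{}^2(U)(y) = 1 - \sqrt{1 - \sqrt y} = K\bigl(\tfrac 12, \tfrac 12\bigr)(y)$ by \eqref{eq:kumaraswamy}.

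I expect the only genuine obstacle to be this last radical simplification — specifically, spotting that the product $ab$ telescopes under a difference of squares to recover $\sqrt y$; everything else is routine substitution. A minor point worth verifying is that the positive root $a - b$ (rather than $b - a$) is the correct sign, which holds because $\sqrt{1-y} \geq 0$ forces $a \geq b$.
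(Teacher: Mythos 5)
Your proposal is correct and follows essentially the same route as the paper's proof: specialize $q_4$, compute $\widetilde f_4(U) = 1 - \sqrt{1-y}$, iterate, and resolve the difference of nested radicals by squaring (the paper calls your $a - b$ simply $s$ and verifies $s^2 = 1 - \sqrt{y}$ in one step). Your explicit bookkeeping of $a^2 + b^2 = 1$ and $ab = \tfrac12\sqrt{y}$, and the remark on the sign of $a - b$, are just slightly more detailed versions of the same argument.
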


\begin{proof}
  For $r=4$, \eqref{eq:liftroot} simplifies to
  \begin{math}
    q_4(y) = \sqrt{1-y}/2
  \end{math}
  and hence \eqref{eq:liftcdf} to
  \begin{equation*}
    \widetilde f_4(F)(y) = F\left(\frac{1-\sqrt{1-y}}2\right) + 1 -
    F\left(\frac{1+\sqrt{1-y}}2\right) \,.
  \end{equation*}
  Then after one step one has
  \begin{align*}
    \widetilde f_4(U)(y) &= \frac{1-\sqrt{1-y}}2 + 1 - \frac{1+\sqrt{1-y}}2
    = 1- \sqrt{1-y} \,,
  \end{align*}
  such that $\widetilde f_4(U) = K(1, 1/2)$, and after two steps one
  has
  \begin{align*}
    \widetilde f_4{}^2(U)(y) &= \widetilde f_4\bigl(\widetilde
    f_4(U)\bigr)(y)
    \\
    &= \widetilde f_4(U)\left(\frac{1-\sqrt{1-y}}2\right) + 1 - \widetilde
    f_4(U)\left(\frac{1+\sqrt{1-y}}2\right)
    \\
    &= 1 - \underbrace{\left(\sqrt{\frac{1+\sqrt{1-y}}2} -
        \sqrt{\frac{1-\sqrt{1-y}}2}\right)}_s \,.
  \end{align*}
  The term $s$ looks rather convoluted, but its square simplifies to
  \begin{math}
    s^2 = 1 - \sqrt{y}
  \end{math},
  so with $s > 0$ conclude that
  \begin{math}
    s = \sqrt{1 - \sqrt{y}}
  \end{math}.
  Hence from \eqref{eq:kumaraswamy} follows \eqref{eq:main}:
  \begin{equation*}
    \widetilde f_4{}^2(U)(y) = 1 - \sqrt{1 - \sqrt{y}} = 1 - (1 - y^{1/2})^{1/2}
  \end{equation*}
  and thus $\widetilde f_4{}^2(U) = K(1/2, 1/2)$.
\end{proof}

\section{Conclusion}

We have examined the probability distribution of successive states of
the chaotic dynamical system described by the logistic map with
parameter value $r=4$. It is known that this system is ergodic and
that its long-term behavior is described by the beta$(1/2,1/2)$
distribution. Hence the distribution of states will converge almost
surely. In order to investigate individual elements of that convergent
sequence, we have given a closed formula for propagating probability
distributions over the iteration of $f_r$
(Theorem~\ref{theorem:propagation}), and validated this approach by
demonstrating that the beta$(1/2, 1/2)$ distribution is indeed a fixed
point solution (Theorem~\ref{theorem:arcsin}).  We have proceeded to
demonstrate that, starting from the uniform distribution (the
maximum-entropy distribution on the state space), after two steps the
Kumaraswamy$(1/2,1/2)$ distribution is reached
(Theorem~\ref{theorem:kumaraswamy}).  Figure~\ref{fig:diagram} shows
the cdfs of the initial state and the first four iterates plotted over
the uniform, Kumaraswamy and beta distributions.

The above findings qualify as purely theoretical results in the sense
that they provide original and provable insights about the
relationship of mathematical objects of theoretical interest. No
suggestion can be given yet as to the application to real-world
statistical problems. Apart from the investigation of potential
practical implications, the presentation of higher iterates
$\widetilde f_4{}^3(U), \dots$ is an open problem. The initial
sequence of distributions $\widetilde f_4{}^n(U)$ looks pretty
regular:
\begin{equation*}
  \underbrace{U = B(1, 1) = K(1, 1)}_{n=0} ~\longmapsto~ \underbrace{K\bigl(1, \tfrac 12\bigr)}_{n=1} ~\longmapsto~ \underbrace{K\bigl(\tfrac 12, \tfrac 12\bigr)}_{n=2} \,,
\end{equation*}
but there is no obvious continuation of the pattern for $n \geq 3$.

\section*{Acknowledgments}

Thanks to Holger Lange, Skog og Landskap, Ås, Norway, for advice on
dynamical systems.

\bibliographystyle{plainnat}
\bibliography{kumaraswamy}

\begin{figure}
  \centering
  \includegraphics[width=\textwidth]{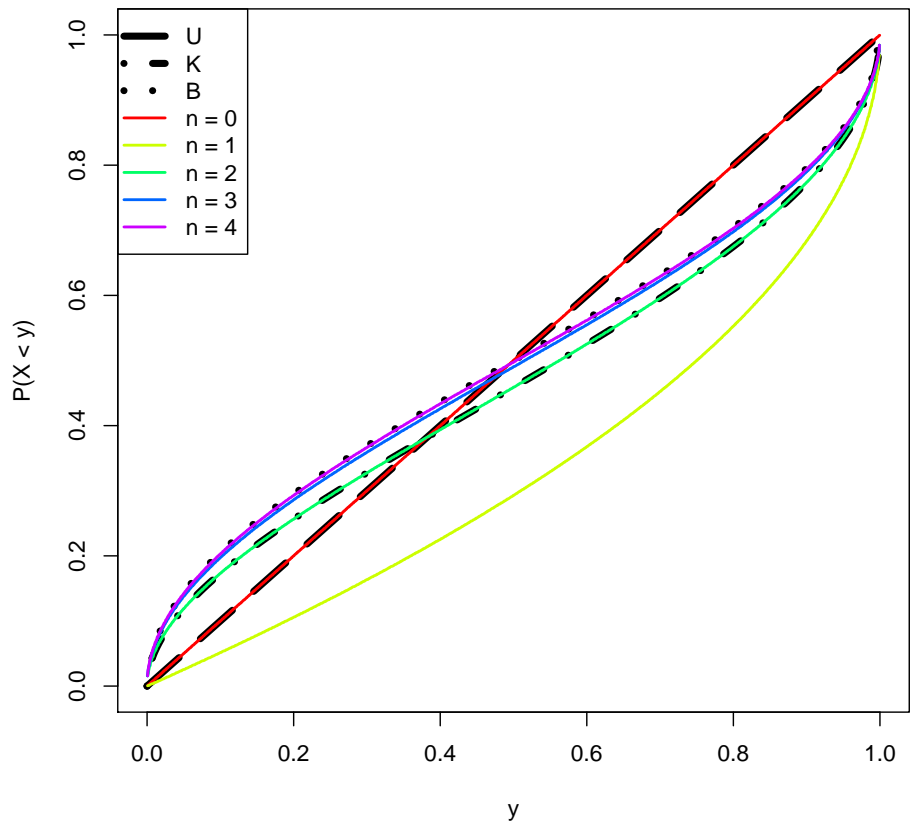}
  \caption{State cdfs $D_n = \widetilde f_4{}^n(U)$ of an initially
    uniformly distributed system running the logistic map with $r=4$,
    after $n = 0, \dots, 4$ steps; plotted over the uniform ($U$),
    Kumaraswamy ($K$) and beta ($B$) distributions, with parameters
    $\alpha = \beta = 1/2$. $D_0$ coincides with $U$. $D_2$ coincides
    with $K$. For $n > 3$, $D_n$ converges towards $B$.}
  \label{fig:diagram}
\end{figure}

\end{document}